\newcommand{\noun}[1]{\textsc{#1}}
\numberwithin{equation}{section}
\numberwithin{figure}{section}
  \theoremstyle{plain}
  \newtheorem*{thm*}{Theorem}
\begin{document}
\selectlanguage{french}%
\textbf{\hfill{}} \foreignlanguage{english}{\textit{Paru à Algebra
i Analiz, 23, 152-166, (2011).}}

\selectlanguage{english}%
\vspace{0.5cm}

\title{Asymptotic sharpness of a Bernstein-type inequality for rational
functions in $H^{2}$}

\author{Rachid Zarouf}
\begin{abstract}
A Bernstein-type inequality in the standard Hardy space $H^{2}$ of
the unit disc $\mathbb{D}=\{z\in\mathbb{C}:\,\vert z\vert<1\},$ for
rational functions in $\mathbb{D}$ having at most $n$ poles all
outside of $\frac{1}{r}\mathbb{D}$, $0<r<1$, is considered. The
asymptotic sharpness is shown as $n\rightarrow\infty$, for every
$r\in[0,\,1).$ 
\end{abstract}
\maketitle

\section*{I. Introduction}

First we recall the classical Bernstein inequality for polynomials:
we denote by $\mathcal{P}_{n}$ the class of all polynomials with
complex coefficients, of degree $n$: $P=\sum_{k=0}^{n}a_{k}z^{k}$.
Let \[
\left\Vert P\right\Vert _{2}=\frac{1}{\sqrt{2\pi}}\left(\int_{\mathbb{T}}\left|P(\zeta)\right|^{2}d\zeta\right)^{\frac{1}{2}}=\left(\sum_{k=0}^{n}\left|a_{k}\right|^{2}\right)^{\frac{1}{2}}.\]
 The classical inequality

\global\long\def\theequation{${1}$}
\begin{equation}
\left\Vert P'\right\Vert _{2}\leq n\left\Vert P\right\Vert _{2}\label{eq:}\end{equation}
 is known as Bernstein's inequality. A great number of refinements
and generalizations of (1) have been obtained. See {[}RaSc, Part III{]}
for an extensive study of that subject. The constant $n$ in (1) is
obviously sharp (take $P=z^{n}$).

Now let $\sigma=\left\{ \lambda_{1},\,...,\,\lambda_{n}\right\} $
be a sequence in the unit disc $\mathbb{D}$, the finite Blaschke
product $B_{\sigma}=\prod_{i=1}^{n}b_{\lambda_{i}}$, where $b_{\lambda}=\frac{\lambda-z}{1-\overline{\lambda}z}$
is an elementary Blaschke factor for $\lambda\in\mathbb{D}.$ Let
also $K_{B_{\sigma}}$ be the $n$-dimensional space defined by\[
K_{B_{\sigma}}=\mathcal{L}in\left(k_{\lambda_{i}}\,:\, i=1,\,...,\, n\right),\]
 where $\sigma$ is a family of distincts elements of $\mathbb{D}$,
and where $k_{\lambda}=\frac{1}{1-\overline{\lambda}z}$ is the Szegö
kernel associated to $\lambda\,.$ An obvious modification allows
to generalize the definition of $K_{B_{\sigma}}$ in the case where
the sequence $\sigma$ admits multiplicities.

Notice that using the scalar product $\left(\cdot,\,\cdot\right)_{H^{2}}$
on $H^{2},$ an equivalent description of this space is:

\[
K_{B_{\sigma}}=(B_{\sigma}H^{2})^{\perp}=H^{2}\ominus B_{\sigma}H^{2},\]
 where $H^{2}$ stands for the standard Hardy space of the unit disc
$\mathbb{D}$,\[
H^{2}=\left\{ f=\sum_{k\geq0}\hat{f}(k)z^{k}:\,\,\left\Vert f\right\Vert _{2}^{2}=\sup_{0\leq r<1}\int_{\mathbb{T}}\left|f(rz)\right|^{2}dm(z)<\infty\right\} ,\]
 $m$ being the Lebesgue normalized measure on $\mathbb{T}.$ We notice
that the case $\lambda_{1}=\lambda_{2}=\dots=\lambda_{n}=0$ gives
$K_{B_{\sigma}}=\mathcal{P}_{n-1}$. The issue of this paper is to
generalize classical Bernstein inequality (1) to the spaces $K_{B_{\sigma}}.$
Notice that every rational functions with poles outside of $\overline{\mathbb{D}}$
lies in a space $K_{B_{\sigma}}.$ It has already been proved in {[}Z1{]}
that if $r=\max_{j}\left|\lambda_{j}\right|,$ and $f\in K_{B_{\sigma}},$
then

\global\long\def\theequation{${2}$}
\begin{equation}
\left\Vert f'\right\Vert _{2}\leq\frac{5}{2}\frac{n}{1-r}\left\Vert f\right\Vert _{2}.\label{eq:}\end{equation}

In fact, Bernstein-type inequalities for rational functions were the
subject of a number of papers and monographs (see, for instance, {[}L{]},
{[}BoEr{]}, {[}DeLo{]}, {[}B1{]}, {[}B2{]}, {[}B3{]}, {[}B4{]} and
{[}B5{]}). Perhaps, the stronger and closer to ours of all known results
are due to K. Dyakonov {[}Dya1{]}. In particular, it is proved in
{[}Dya1{]} that the norm $\left\Vert D\right\Vert _{K_{B}\rightarrow H^{2}}$
of the differentiation operator $Df=f'$ on a space $K_{B}$ satisfies
the following double inequality \[
a\left\Vert B'\right\Vert _{\infty}\leq\left\Vert D\right\Vert _{K_{B}\rightarrow H^{2}}\leq A\left\Vert B'\right\Vert _{\infty},\]
 where $a=\frac{1}{36c}$, $A=\frac{36+c}{2\pi}$ and $c=2\sqrt{3\pi}$
(as one can check easily ($c$ is not precised in {[}Dya1{]})). It
implies an inequality of type $(2)$ (with a constant about $\frac{13}{2}$
instead of $\frac{5}{2}$) .

Our goal is to find an inequality for $\sup\,\left\Vert D\right\Vert _{K_{B}\rightarrow H^{2}}=C_{n,\, r}$
($\sup$ is over all $B$ with given $n={\rm deg}\, B$ and $r=\max\,_{\lambda\in\sigma}\left|\lambda\right|$),
which is asymptotically sharp as $n\rightarrow\infty$. Our result
is that there exists a limit $\lim_{n\rightarrow\infty}\frac{C_{n,\, r}}{n}=\frac{1+r}{1-r}$
for every $r,\;0\leq r<1$. Our method is different from {[}Dya1{]}
and is based on an elementary Hilbert space construction for an orthonormal
basis in $K_{B}$.

\section*{II. The result}
\begin{thm*}
\begin{flushleft}
Let $n\geq1,$ $\sigma=\left\{ \lambda_{1},\,...,\,\lambda_{n}\right\} $
be a sequence in the unit disc $\mathbb{D}$, and $B_{\sigma}$ the
finite Blaschke product $B_{\sigma}=\prod_{i=1}^{n}b_{\lambda_{i}}$,
where $b_{\lambda}=\frac{\lambda-z}{1-\overline{\lambda}z}$ is an
elementary Blaschke factor for $\lambda\in\mathbb{D}.$ Let also $K_{B_{\sigma}}$
be the $n$-dimensional subspace of $H^{2}$ defined by 
\par\end{flushleft}

\[
K_{B_{\sigma}}=(B_{\sigma}H^{2})^{\perp}=H^{2}\ominus B_{\sigma}H^{2}.\]
 Let $D$ be the operator of differentiation on $\left(K_{B_{\sigma}},\,\left\Vert \cdot\right\Vert _{2}\right):$
\[
D:\:\left(K_{B_{\sigma}},\,\left\Vert \cdot\right\Vert _{2}\right)\rightarrow\left(H^{2},\,\left\Vert \cdot\right\Vert _{2}\right)\]
 \[
f\mapsto f',\]
 where $\left\Vert f\right\Vert _{2}=\frac{1}{\sqrt{2\pi}}\left(\int_{\mathbb{T}}\left|f(\zeta)\right|^{2}d\zeta\right)^{\frac{1}{2}}.$
For $r\in[0,\,1)$ and $n\geq1$ , we set\[
C_{n,\, r}=\sup\left\{ \left\Vert D\right\Vert _{K_{B_{\sigma}}\rightarrow H^{2}}:\,1\leq{\rm card}\,\sigma\leq n,\,\left|\lambda\right|\leq r\:\forall\lambda\in\sigma\right\} .\]
 (i) If $n=1$ and $\sigma=\{\lambda\},$ we have \[
\left\Vert D\right\Vert _{K_{B_{\sigma}}\rightarrow H^{2}}=\left|\lambda\right|\left(\frac{1}{1-\left|\lambda\right|^{2}}\right)^{\frac{1}{2}}.\]
 If $n\geq2,$ \[
a(n,\, r)\frac{n}{1-r}\leq C_{n,\, r}\leq A(n,\, r)\frac{n}{1-r},\]
 where \[
a(n,\, r)\geq\frac{1}{1+r}\left(1+5r^{4}-\frac{4r^{4}}{n}-\min\left(\frac{3}{4},\,\frac{2}{n}\right)\right)^{\frac{1}{2}},\]
 and \[
A(n,\, r)\leq1+r+\frac{1}{\sqrt{n}}.\]
 (ii) Moreover, the sequence \[
\left(\frac{1}{n}C_{n,\, r}\right)_{n\geq1},\]
 is convergent and \[
\lim_{n\rightarrow\infty}\frac{1}{n}C_{n,\, r}=\frac{1+r}{1-r},\]
 for all $r\in[0,\,1)$.\end{thm*}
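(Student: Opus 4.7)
The plan is to express $D : K_{B_\sigma} \to H^2$ in the Malmquist--Walsh orthonormal basis
\[ e_j(z) = \frac{\sqrt{1-|\lambda_j|^2}}{1-\bar\lambda_j z}\prod_{k=1}^{j-1} b_{\lambda_k}(z), \qquad j=1,\ldots,n, \]
so that every $f\in K_{B_\sigma}$ decomposes as $f=\sum_j c_j e_j$ with $\|f\|_2^2 = \sum|c_j|^2$. The boundary identity $|B_\sigma'(\zeta)| = \sum_{k=1}^n |e_k(\zeta)|^2 = \sum_{k=1}^n (1-|\lambda_k|^2)/|1-\bar\lambda_k\zeta|^2$ on $\mathbb{T}$ will be used throughout, since it ties $\|D\|$ to the weight $|B_\sigma'|$.

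For part (i), the $n=1$ formula follows by expanding $e_1=\sqrt{1-|\lambda|^2}\,k_\lambda$ in the monomial basis of $H^2$ and computing $\|e_1'\|_2$ term by term. The upper bound $A(n,r)\le 1+r+n^{-1/2}$ for $n\ge 2$ comes from using the logarithmic derivative $e_j'/e_j$ together with $|b_{\lambda_k}|=1$ on $\mathbb{T}$ to obtain pointwise control of $|f'|$ on the circle, then invoking the Cauchy--Schwarz inequality and splitting the resulting sum into a diagonal contribution (which supplies the factor $1+r$) and an off-diagonal remainder (absorbed in $n^{-1/2}$). The explicit lower bound $a(n,r)$ is obtained by specializing $\sigma$ to $n$ copies of $r$ and testing $D$ on a short linear combination of the top basis vectors; the resulting Rayleigh quotient can be evaluated in closed form.

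For part (ii), the upper half $\limsup_n C_{n,r}/n\le (1+r)/(1-r)$ follows immediately from $A(n,r)\to 1+r$. The matching lower half requires a sharper family of test functions than the one producing $a(n,r)$. I would again take $\sigma$ equal to $n$ copies of $r$ and use the substitution $u=b_r(\zeta)$, which identifies $K_{b_r^n}$ isometrically with $\mathcal{P}_{n-1}\subset H^2$ via $f=e_1\cdot(P\circ b_r)\leftrightarrow P$. A direct computation then yields
\[ \|f'\|_2^2 = \frac{1}{(1-r^2)^2}\int_{\mathbb{T}} |1-ru|^2\,\bigl|rP(u)-(1-ru)P'(u)\bigr|^2\,dm(u), \]
and choosing $P_n$ of unit $H^2$-norm whose mass concentrates near $u=-1$ --- the image under $b_r$ of the point $\zeta=1$ where $|B_\sigma'|$ attains its maximum $n(1+r)/(1-r)$ --- drives $\|f'\|_2/\|f\|_2$ to $n(1+r)/(1-r)-o(n)$.

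The main obstacle is this asymptotic lower bound. The explicit $a(n,r)$ from part (i) does not approach $(1+r)/(1-r)$ as $n\to\infty$, so a genuinely different peaking construction is required: one must exhibit unit-norm polynomials $P_n\in\mathcal{P}_{n-1}$ whose $H^2$-mass concentrates enough at a single boundary point to saturate $\|B_\sigma'\|_\infty$ in the limit, while keeping the correction coming from the term $(1-ru)P_n'(u)$ under control. The upper bound, by contrast, is softer and only requires that the Malmquist basis absorbs the factor $1-r$ uniformly in $n$.
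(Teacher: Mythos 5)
Your route for part (ii) is in substance the paper's own: specialize to $\sigma$ equal to $n$ copies of $r$, use the Malmquist--Walsh basis $e_{k}=e_{1}b_{r}^{k-1}$ to identify $K_{b_{r}^{n}}$ unitarily with $\mathcal{P}_{n-1}$ via $P\mapsto e_{1}\cdot(P\circ b_{r})$, and your displayed identity for $\Vert f'\Vert_{2}^{2}$ is exactly the paper's formula $(4)$ with $P(v)=\sum_{k}(f,e_{k+1})_{H^{2}}v^{k}$. The genuine gap is the one you name yourself: the peaking family $P_{n}$ is never produced, and that construction is the entire content of the asymptotic sharpness. Moreover, ``mass concentrating near $u=-1$'' is not by itself the right specification. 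Two properties must hold simultaneously: (a) essentially all of the spectrum of $P_{n}$ must sit at degrees $n(1-o(1))$, so that $\Vert P_{n}'\Vert_{2}\geq n(1-o(1))\Vert P_{n}\Vert_{2}$ (near-saturation of the classical Bernstein inequality); and (b) the measures $|P_{n}|^{2}dm/\Vert P_{n}\Vert_{2}^{2}$ must converge weakly to $\delta_{-1}$, so that the weight $|1-ru|^{2}$ contributes its maximum $(1+r)^{2}$. A single monomial gives (a) but not (b); a Dirichlet-type kernel at $-1$ of bounded degree gives (b) but not (a). The paper resolves the tension by taking $f=\sum_{k=0}^{s_{n}+2}(-1)^{k}e_{n-k}$, i.e. $P_{n}(v)=v^{\,n-s_{n}-3}\left(1-v+v^{2}-\cdots+v^{s_{n}+2}\right)$, a Dirichlet kernel centred at $v=-1$ modulated up to the top frequencies, with $s_{n}\rightarrow\infty$ (forcing (b)) and $s_{n}=o(n)$ (forcing (a)); both conditions on $s_{n}$ are needed. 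Note also that in your formula the term $r(1-ru)P_{n}(u)$ contributes only $O(\Vert P_{n}\Vert_{2})=o(n\Vert P_{n}\Vert_{2})$ and is the negligible piece, whereas $(1-ru)^{2}P_{n}'(u)$ is the main term to be bounded from \emph{below}; your phrase about ``keeping the correction coming from $(1-ru)P_{n}'(u)$ under control'' suggests the roles are reversed in your mind, which would derail the estimate.

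Two smaller points. For the explicit constant $a(n,r)$ in part (i) the paper tests $D$ on the single top basis vector $e_{n}=\frac{(1-r^{2})^{1/2}}{1-rz}b_{r}^{n-1}$ and evaluates $\Vert e_{n}'\Vert_{2}$ in closed form; an unspecified ``short linear combination'' will give a lower bound of the correct order but not necessarily the stated expression, so to match the theorem you should take $f=e_{n}$. For the upper bound $A(n,r)\leq 1+r+n^{-1/2}$ the paper does not argue from the boundary identity at all: it quotes [Z1, Proposition 4.1], which yields $\Vert D\Vert\leq\frac{1}{1-r}+\frac{1+r}{1-r}(n-1)+\frac{\sqrt{n-2}}{1-r}$. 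Your diagonal/off-diagonal sketch after Cauchy--Schwarz is plausible in spirit, but as written it is not clear why the off-diagonal contribution is $O(\sqrt{n}/(1-r))$ rather than $O(n/(1-r))$; this step needs either the detailed computation or the citation before part (i) and Step 1 of part (ii) are complete.
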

\begin{proof}
\textbf{We first prove (i).} We suppose that $n=1.$ In this case,
$K_{B}=\mathbb{C}e_{1}$ , where \[
e_{1}=\frac{\left(1-\left|\lambda\right|^{2}\right)^{\frac{1}{2}}}{\left(1-\overline{\lambda}z\right)},\,\left|\lambda\right|\leq r,\]
 ($e_{1}$ being of norm 1 in $H^{2}$). Calculating, \[
e_{1}'=\frac{\overline{\lambda}\left(1-\left|\lambda\right|^{2}\right)^{\frac{1}{2}}}{\left(1-\overline{\lambda}z\right)^{2}},\]
 and \[
\left\Vert e_{1}'\right\Vert _{2}=\left|\lambda\right|\left(1-\left|\lambda\right|^{2}\right)^{\frac{1}{2}}\left\Vert \frac{1}{\left(1-\overline{\lambda}z\right)^{2}}\right\Vert _{2}=\]
 \[
=\left|\lambda\right|\left(1-\left|\lambda\right|^{2}\right)^{\frac{1}{2}}\left(\sum_{k\geq0}(k+1)\left|\lambda\right|^{2k}\right)^{\frac{1}{2}}=\left|\lambda\right|\left(1-\left|\lambda\right|^{2}\right)^{\frac{1}{2}}\frac{1}{\left(1-\left|\lambda\right|^{2}\right)}=\left|\lambda\right|\left(\frac{1}{1-\left|\lambda\right|^{2}}\right)^{\frac{1}{2}},\]
 we get

\textit{\[
\left\Vert D_{\vert K_{B_{\sigma}}}\right\Vert =\left|\lambda\right|\left(\frac{1}{1-\left|\lambda\right|^{2}}\right)^{\frac{1}{2}}.\]
 }

\begin{flushleft}
Now, we suppose that $n\geq2.$ First, we prove the left-hand side
inequality. Let \[
e_{n}=\frac{\left(1-r^{2}\right)^{\frac{1}{2}}}{1-rz}b_{r}^{n-1}.\]
 Then $e_{n}\in K_{b_{r}^{n}}$ and $\left\Vert e_{n}\right\Vert _{2}=1,$
(see {[}N1{]}, Malmquist-Walsh Lemma, p.116). Moreover,\[
e_{n}'=\frac{r\left(1-r^{2}\right)^{\frac{1}{2}}}{\left(1-rz\right)^{2}}b_{r}^{n-1}+(n-1)\frac{\left(1-r^{2}\right)^{\frac{1}{2}}}{1-rz}b_{r}'b_{r}^{n-2}=\]
 \[
=-\frac{r}{\left(1-r^{2}\right)^{\frac{1}{2}}}b_{r}'b_{r}^{n-1}+(n-1)\frac{\left(1-r^{2}\right)^{\frac{1}{2}}}{1-rz}b_{r}'b_{r}^{n-2},\]
 since $b_{r}^{'}=\frac{r^{2}-1}{\left(1-rz\right)^{2}}$. Then,\[
e_{n}'=b_{r}'\left[-\frac{r}{\left(1-r^{2}\right)^{\frac{1}{2}}}b_{r}^{n-1}+(n-1)\frac{\left(1-r^{2}\right)^{\frac{1}{2}}}{1-rz}b_{r}^{n-2}\right],\]
 and \[
\left\Vert e_{n}'\right\Vert _{2}^{2}=\frac{1}{2\pi}\int_{\mathbb{T}}\left|b_{r}'(w)\right|\left|b_{r}'(w)\right|\left|-\frac{r}{\left(1-r^{2}\right)^{\frac{1}{2}}}\left(b_{r}(w)\right)^{n-1}+(n-1)\frac{\left(1-r^{2}\right)^{\frac{1}{2}}}{1-rw}\left(b_{r}(w)\right)^{n-2}\right|^{2}dm(w)=\]
 \[
=\frac{1}{2\pi}\int_{\mathbb{T}}\left|b_{r}'(w)\right|\left|b_{r}'(w)\right|\left|-\frac{r}{\left(1-r^{2}\right)^{\frac{1}{2}}}b_{r}(w)+(n-1)\frac{\left(1-r^{2}\right)^{\frac{1}{2}}}{1-rw}\right|^{2}dm(w),\]
 which gives, using the variables $u=b_{r}(w)$,\[
\left\Vert e_{n}'\right\Vert _{2}^{2}=\frac{1}{2\pi}\int_{\mathbb{T}}\left|b_{r}'\left(b_{r}(u)\right)\right|\left|-\frac{r}{\left(1-r^{2}\right)^{\frac{1}{2}}}u+(n-1)\frac{\left(1-r^{2}\right)^{\frac{1}{2}}}{1-rb_{r}(u)}\right|^{2}dm(u).\]
 But $1-rb_{r}=\frac{1-rz-r(r-z)}{1-rz}=\frac{1-r^{2}}{1-rz}$ and
$b_{r}'\circ b_{r}=\frac{r^{2}-1}{\left(1-rb_{r}\right)^{2}}=-\frac{\left(1-rz\right)^{2}}{1-r^{2}}$.
This implies \[
\left\Vert e_{n}'\right\Vert _{2}^{2}=\frac{1}{2\pi}\int_{\mathbb{T}}\left|\frac{\left(1-ru\right)^{2}}{1-r^{2}}\right|\left|-\frac{r}{\left(1-r^{2}\right)^{\frac{1}{2}}}u+(n-1)\frac{\left(1-r^{2}\right)^{\frac{1}{2}}}{1-r^{2}}(1-ru)\right|^{2}dm(u)=\]
 \[
=\frac{1}{\left(1-r^{2}\right)^{2}}\frac{1}{2\pi}\int_{\mathbb{T}}\left|\left(1-ru\right)\left(-ru+(n-1)(1-ru)\right)\right|^{2}dm(u).\]
 Without loss of generality we can replace $r$ by $-r,$ which gives\[
\left\Vert e_{n}'\right\Vert _{2}=\frac{1}{1-r^{2}}\left\Vert \varphi_{n}\right\Vert _{2},\]
 where $\varphi_{n}=\left(1+rz\right)\left(rz+(n-1)(1+rz)\right).$
Expanding, we get \[
\varphi_{n}=(1+rz)(nrz+(n-1))=\]
 \[
=nrz+(n-1)+nr^{2}z^{2}+(n-1)r^{}z=\]
 \[
=(n-1)+(nr+(n-1)r^{})z+nr^{2}z^{2},\]
 and \[
\left\Vert e_{n}'\right\Vert _{2}^{2}=\frac{1}{\left(1-r^{2}\right)^{2}}\left((n-1)^{2}+(2n-1)^{2}r^{2}+n^{2}r^{4}\right)=\]
 \[
=\frac{n^{2}}{\left(1-r^{2}\right)^{2}}\left(1+4r^{2}+r^{4}-\frac{2}{n}-\frac{4r^{2}}{n}+\frac{1}{n^{2}}+\frac{r^{2}}{n^{2}}\right)=\]
 \[
=\left(\frac{n}{1-r}\right)^{2}\left(\frac{1}{1+r}\right)^{2}\left(1+4r^{2}+r^{4}-\frac{2}{n}-\frac{4r^{2}}{n}+\frac{1+r^{2}}{n^{2}}\right)=\]
 \[
=\left(\frac{n}{1-r}\right)^{2}\left(\frac{1}{1+r}\right)^{2}\left(1+4r^{2}+r^{4}-5r^{4}+5r^{4}-\frac{4r^{4}}{n}+\frac{4r^{4}}{n}-\frac{4r^{2}}{n}-\frac{2}{n}+\frac{1+r^{2}}{n^{2}}\right)=\]
 \[
=\left(\frac{n}{1-r}\right)^{2}\left(\frac{1}{1+r}\right)^{2}\left(4r^{2}(1-r^{2})-\frac{4r^{2}}{n}(1-r^{2})+\frac{1+r^{2}}{n^{2}}+1+5r^{4}-\frac{4r^{4}}{n}-\frac{2}{n}\right)=\]
 \[
=\left(\frac{n}{1-r}\right)^{2}\left(\frac{1}{1+r}\right)^{2}\left(4r^{2}(1-r^{2})(1-\frac{1}{n})+\frac{1+r^{2}}{n^{2}}+1+5r^{4}-\frac{4r^{4}}{n}-\frac{2}{n}\right)\geq\]
 \[
\geq\left(\frac{n}{1-r}\right)^{2}\left(\frac{1}{1+r}\right)^{2}\left\{ \begin{array}{c}
1+5r^{4}-\frac{4r^{4}}{n}-\frac{2}{n}\; if\; n>2\\
\frac{1}{4}+1+5r^{4}-\frac{4r^{4}}{2}-\frac{2}{2}\; if\; n=2\end{array}\right.\geq\]
 \[
\geq\left(\frac{n}{1-r}\right)^{2}\left(\frac{1}{1+r}\right)^{2}\left(1+5r^{4}-\frac{4r^{4}}{n}-\min\left(\frac{3}{4},\,\frac{2}{n}\right)\right),\]
 and \textit{\[
a(n,\, r)\geq\frac{1}{1+r}\left(1+5r^{4}-\frac{4r^{4}}{n}-\min\left(\frac{3}{4},\,\frac{2}{n}\right)\right)^{\frac{1}{2}},\]
 } 
\par\end{flushleft}

\begin{flushleft}
which completes the proof of the left hand side inequality . 
\par\end{flushleft}

\begin{flushleft}
We show now the right hand side one. Let $\sigma$ be a sequence in
$\mathbb{D}$ such that $1\leq{\rm card}\,\sigma\leq n,\,\left|\lambda\right|\leq r\:\forall\lambda\in\sigma$.
Using {[}Z1{]}, Proposition 4.1, we have \[
\left\Vert D\right\Vert _{K_{B_{\sigma}}\rightarrow H^{2}}\leq\frac{1}{1-r}+\frac{1+r}{1-r}(n-1)+\frac{1}{1-r}\sqrt{n-2}=\]
 \[
=\frac{1}{1-r}\left(1+(1+r)(n-1)+\sqrt{n-2}\right)=\]
 \[
=\frac{1}{1-r}\left(n(1+r)-r+\sqrt{n-2}\right)=\frac{n}{1-r}\left(1+r-\frac{r}{n}+\sqrt{\frac{1}{n}-\frac{2}{n^{2}}}\right)=\]
 \[
\leq\frac{n}{1-r}\left(1+r+\sqrt{\frac{1}{n}}\right),\]
 which gives the result. 
\par\end{flushleft}

Now, we prove (ii). \textbf{Step 1. }We first prove the right-hand
side inequality:\[
\limsup_{n\rightarrow\infty}\frac{1}{n}C_{n,\, r}\leq\frac{1+r}{1-r},\]
 which becomes obvious since \[
\left\Vert D\right\Vert _{K_{B_{\sigma}}\rightarrow H^{2}}\leq\frac{n}{1-r}\left(1+r+\sqrt{\frac{1}{n}}\right)\,.\]

\textbf{Step 2.} We now prove the left-hand side inequality:\[
\liminf_{n\rightarrow\infty}\frac{1}{n}C_{n,\, r}\geq\frac{1+r}{1-r}.\]
 More precisely, we show that\[
\liminf_{n\rightarrow\infty}\frac{1}{n}\left\Vert D\right\Vert _{K_{b_{r}^{n}}\rightarrow H^{2}}\geq\frac{1+r}{1-r}.\]
 Let $f\in K_{b_{r}^{n}}.$ Then, \[
f'=\left(f,\, e_{1}\right)_{H^{2}}\frac{r}{\left(1-rz\right)}e_{1}+\sum_{k=2}^{n}(k-1)\left(f,\, e_{k}\right)_{H^{2}}\frac{b_{r}'}{b_{r}}e_{k}+r\sum_{k=2}^{n}\left(f,\, e_{k}\right)_{H^{2}}\frac{1}{\left(1-rz\right)}e_{k}=\]
 \[
=r\sum_{k=1}^{n}\left(f,\, e_{k}\right)_{H^{2}}\frac{1}{\left(1-rz\right)}e_{k}+\frac{1-r^{2}}{(1-rz)(z-r)}\sum_{k=2}^{n}(k-1)\left(f,\, e_{k}\right)_{H^{2}}e_{k}=\]
 \[
=\frac{r\left(1-r^{2}\right)^{\frac{1}{2}}}{\left(1-rz\right)^{2}}\sum_{k=1}^{n}\left(f,\, e_{k}\right)_{H^{2}}b_{r}^{k-1}+\frac{\left(1-r^{2}\right)^{\frac{3}{2}}}{(1-rz)^{2}(z-r)}\sum_{k=2}^{n}(k-1)\left(f,\, e_{k}\right)_{H^{2}}b_{r}^{k-1},\]
 which gives

\global\long\def\theequation{${3}$}
\begin{equation}
f'=-b_{r}'\left[\frac{r}{\left(1-r^{2}\right)^{\frac{1}{2}}}\sum_{k=1}^{n}\left(f,\, e_{k}\right)_{H^{2}}b_{r}^{k-1}+\frac{\left(1-r^{2}\right)^{\frac{1}{2}}}{z-r}\sum_{k=2}^{n}(k-1)\left(f,\, e_{k}\right)_{H^{2}}b_{r}^{k-1}\right].\label{eq:}\end{equation}
 Now using the change of variables $v=b_{r}(u),$ we get\[
\left\Vert f'\right\Vert _{2}^{2}=\int_{\mathbb{T}}\left|b_{r}'(u)\right|\left|b_{r}'(u)\right|\left|\frac{r}{\left(1-r^{2}\right)^{\frac{1}{2}}}\sum_{k=1}^{n}\left(f,\, e_{k}\right)_{H^{2}}b_{r}^{k-1}+\frac{\left(1-r^{2}\right)^{\frac{1}{2}}}{u-r}\sum_{k=2}^{n}(k-1)\left(f,\, e_{k}\right)_{H^{2}}b_{r}^{k-1}\right|^{2}du=\]
 \[
=\int_{\mathbb{T}}\left|b_{r}'(b_{r}(v))\right|\left|\frac{r}{\left(1-r^{2}\right)^{\frac{1}{2}}}\sum_{k=1}^{n}\left(f,\, e_{k}\right)_{H^{2}}v^{k-1}+\frac{\left(1-r^{2}\right)^{\frac{1}{2}}}{b_{r}(v)-r}\sum_{k=2}^{n}(k-1)\left(f,\, e_{k}\right)_{H^{2}}v^{k-1}\right|^{2}dv.\]
 But \[
b_{r}-r=\frac{r-z-r(1-rz)}{1-rz}=\frac{z(r^{2}-1)}{1-rz},\]
 and\[
b_{r}'\circ b_{r}=\frac{r^{2}-1}{\left(1-rb_{r}\right)^{2}}=-\frac{\left(1-rz\right)^{2}}{1-r^{2}},\]
 which gives\[
\left\Vert f'\right\Vert _{2}^{2}=\]
 \[
=\frac{1}{1-r^{2}}\int_{\mathbb{T}}\left|\left(1-rv\right)^{2}\right|\left|\frac{r}{\left(1-r^{2}\right)^{\frac{1}{2}}}\sum_{k=1}^{n}\left(f,\, e_{k}\right)_{H^{2}}v^{k-1}+\frac{\left(1-r^{2}\right)^{\frac{1}{2}}}{v(r^{2}-1)}(1-rv)\sum_{k=2}^{n}(k-1)\left(f,\, e_{k}\right)_{H^{2}}v^{k-1}\right|^{2}dv=\]
 \[
=\frac{1}{\left(1-r^{2}\right)^{2}}\int_{\mathbb{T}}\left|\left(1-rv\right)^{2}\right|\left|r\sum_{k=1}^{n}\left(f,\, e_{k}\right)_{H^{2}}v^{k-1}-(1-rv)\sum_{k=2}^{n}(k-1)\left(f,\, e_{k}\right)_{H^{2}}v^{k-2}\right|^{2}dv,\]
 and

\global\long\def\theequation{${4}$}
\begin{equation}
\left\Vert f'\right\Vert _{2}^{2}=\frac{1}{\left(1-r^{2}\right)^{2}}\int_{\mathbb{T}}\left|r\left(1-rv\right)\sum_{k=0}^{n-1}\left(f,\, e_{k+1}\right)_{H^{2}}v^{k}-(1-rv)^{2}\sum_{k=0}^{n-2}(k+1)\left(f,\, e_{k+2}\right)_{H^{2}}v^{k}\right|^{2}dv.\label{eq:}\end{equation}
 In particular,

\global\long\def\theequation{${5}$}
\begin{equation}
\frac{1}{n\left\Vert f\right\Vert _{2}}\left[\left\Vert (1-rv)^{2}\sum_{k=0}^{n-2}(k+1)\left(f,\, e_{k+2}\right)_{H^{2}}v^{k}\right\Vert _{2}+\left\Vert r\left(1-rv\right)\sum_{k=0}^{n-1}\left(f,\, e_{k+1}\right)_{H^{2}}v^{k}\right\Vert _{2}\right]\geq\label{eq:}\end{equation}

\[
\geq\frac{1-r^{2}}{n}\frac{\left\Vert f'\right\Vert _{2}}{\left\Vert f\right\Vert _{2}}\geq\]
 \[
\frac{1}{n\left\Vert f\right\Vert _{2}}\left[\left\Vert (1-rv)^{2}\sum_{k=0}^{n-2}(k+1)\left(f,\, e_{k+2}\right)_{H^{2}}v^{k}\right\Vert _{2}-\left\Vert r\left(1-rv\right)\sum_{k=0}^{n-1}\left(f,\, e_{k+1}\right)_{H^{2}}v^{k}\right\Vert _{2}\right].\]
 Now, we notice that on one hand

\global\long\def\theequation{${6}$}
\begin{equation}
\left\Vert r\left(1-rv\right)\sum_{k=0}^{n-1}\left(f,\, e_{k+1}\right)_{H^{2}}v^{k}\right\Vert _{2}\leq r(1+r)\left(\sum_{k=0}^{n-1}\left|\left(f,\, e_{k+1}\right)_{H^{2}}\right|^{2}\right)^{1/2}\leq r(1+r)\left\Vert f\right\Vert _{2},\label{eq:}\end{equation}
 and on the other hand, \[
(1-rv)^{2}\sum_{k=0}^{n-2}(k+1)\left(f,\, e_{k+2}\right)_{H^{2}}v^{k}=\]
 \[
=(1-2rv+r^{2}v^{2})\sum_{k=0}^{n-2}(k+1)\left(f,\, e_{k+2}\right)_{H^{2}}v^{k}=\]
 \[
=\sum_{k=0}^{n-2}(k+1)\left(f,\, e_{k+2}\right)_{H^{2}}v^{k}-2r\sum_{k=0}^{n-2}(k+1)\left(f,\, e_{k+2}\right)_{H^{2}}v^{k+1}+r^{2}\sum_{k=0}^{n-2}(k+1)\left(f,\, e_{k+2}\right)_{H^{2}}v^{k+2}=\]
 \[
=\sum_{k=0}^{n-2}(k+1)\left(f,\, e_{k+2}\right)_{H^{2}}v^{k}-2r\sum_{k=1}^{n-1}k\left(f,\, e_{k+1}\right)_{H^{2}}v^{k}+r^{2}\sum_{k=2}^{n}(k-1)\left(f,\, e_{k}\right)_{H^{2}}v^{k}=\]
 \[
=\left(f,\, e_{2}\right)_{H^{2}}+2\left(f,\, e_{3}\right)_{H^{2}}v+\sum_{k=2}^{n-2}\left[(k+1)\left(f,\, e_{k+2}\right)_{H^{2}}-2rk\left(f,\, e_{k+1}\right)_{H^{2}}+r^{2}(k-1)\left(f,\, e_{k}\right)_{H^{2}}\right]v^{k}+\]
 \[
-2r\left[\left(f,\, e_{2}\right)_{H^{2}}v+(n-1)\left(f,\, e_{n}\right)_{H^{2}}v^{n-1}\right]+r^{2}\left[(n-2)\left(f,\, e_{n-1}\right)_{H^{2}}v^{n-1}+(n-1)\left(f,\, e_{n}\right)_{H^{2}}v^{n}\right],\]
 which gives

\global\long\def\theequation{${7}$}
\begin{equation}
(1-rv)^{2}\sum_{k=0}^{n-2}(k+1)\left(f,\, e_{k+2}\right)_{H^{2}}v^{k}=\left(f,\, e_{2}\right)_{H^{2}}+2\left[\left(f,\, e_{3}\right)_{H^{2}}-r\left(f,\, e_{2}\right)_{H^{2}}\right]v+\label{eq:}\end{equation}

\[
+\sum_{k=2}^{n-2}\left[(k+1)\left(f,\, e_{k+2}\right)_{H^{2}}-2rk\left(f,\, e_{k+1}\right)_{H^{2}}+r^{2}(k-1)\left(f,\, e_{k}\right)_{H^{2}}\right]v^{k}+\]
 \[
+\left[r^{2}(n-2)\left(f,\, e_{n-1}\right)_{H^{2}}-2r(n-1)\left(f,\, e_{n}\right)_{H^{2}}\right]v^{n-1}+r^{2}(n-1)\left(f,\, e_{n}\right)_{H^{2}}v^{n}.\]
 Now, let $s=(s_{n})_{n}$ be a sequence of even integers such that\[
\lim_{n\rightarrow\infty}s_{n}=\infty\:\mbox{and}\; s_{n}=o(n)\;{\rm as}\; n\rightarrow\infty.\]
 Then we consider the following function $f$ in $K_{b_{r}^{n}}$:
\[
f=e_{n}-e_{n-1}+e_{n-2}-e_{n-3}+...+(-1)^{k}e_{n-k}+...+e_{n-s}-e_{n-s-1}+e_{n-s-2}=\]
 \[
=\sum_{k=0}^{s+2}(-1)^{k}e_{n-k}.\]
 Using (6) on one hand, we get

\global\long\def\theequation{${8}$}
\begin{equation}
\lim_{n\rightarrow\infty}\frac{1}{n\left\Vert f\right\Vert _{2}}\left\Vert r\left(1-rv\right)\sum_{k=0}^{n-1}\left(f,\, e_{k+1}\right)_{H^{2}}v^{k}\right\Vert _{2}=0,\label{eq:}\end{equation}
 and applying (7) on the other hand, we obtain\[
\left\Vert (1-rv)^{2}\sum_{k=0}^{n-2}(k+1)\left(f,\, e_{k+2}\right)_{H^{2}}v^{k}\right\Vert _{2}^{2}=\left|\left(f,\, e_{2}\right)_{H^{2}}\right|^{2}+4\left|\left(f,\, e_{3}\right)_{H^{2}}-r\left(f,\, e_{2}\right)_{H^{2}}\right|^{2}+\]
 \[
+\left|r^{2}(n-2)\left(f,\, e_{n-1}\right)_{H^{2}}-2r(n-1)\left(f,\, e_{n}\right)_{H^{2}}\right|^{2}+r^{4}(n-1)^{2}\left|\left(f,\, e_{n}\right)_{H^{2}}\right|^{2}+\]
 \[
+\sum_{k=2}^{n-2}\left|(k+1)\left(f,\, e_{k+2}\right)_{H^{2}}-2rk\left(f,\, e_{k+1}\right)_{H^{2}}+r^{2}(k-1)\left(f,\, e_{k}\right)_{H^{2}}\right|^{2},\]
 which gives \[
\left\Vert (1-rv)^{2}\sum_{k=0}^{n-2}(k+1)\left(f,\, e_{k+2}\right)_{H^{2}}v^{k}\right\Vert _{2}^{2}=\]
 \[
=\left|r^{2}(n-2)+2r(n-1)\right|^{2}+r^{4}(n-1)^{2}+\]
 \[
+\sum_{l=2}^{n-2}\left|(n-l+1)\left(f,\, e_{n-l+2}\right)_{H^{2}}-2r(n-l)\left(f,\, e_{n-l+1}\right)_{H^{2}}+r^{2}(n-l-1)\left(f,\, e_{n-l}\right)_{H^{2}}\right|^{2},\]
 setting the change of index $l=n-k$ in the last sum. This finally
gives\[
\left\Vert (1-rv)^{2}\sum_{k=0}^{n-2}(k+1)\left(f,\, e_{k+2}\right)_{H^{2}}v^{k}\right\Vert _{2}^{2}=\]
 \[
=\left|r^{2}(n-2)+2r(n-1)\right|^{2}+r^{4}(n-1)^{2}+\]
 \[
+\sum_{l=2}^{s+1}\left|(n-l+1)+2r(n-l)+r^{2}(n-l-1)\right|^{2}+\]
 \[
+\left|(n-s-1)+2r(n-s-2)\right|^{2}+\left|n-s-2\right|^{2}.\]
 And\[
\left\Vert (1-rv)^{2}\sum_{k=0}^{n-2}(k+1)\left(f,\, e_{k+2}\right)_{H^{2}}v^{k}\right\Vert _{2}^{2}\geq\]
 \[
\geq\left|r^{2}(n-2)+2r(n-1)\right|^{2}+r^{4}(n-1)^{2}+\]
 \[
+s\left|(n-s)+2r(n-s-1)+r^{2}(n-s-2)\right|^{2}+\]
 \[
+\left|(n-s-1)+2r(n-s-2)\right|^{2}+\left|n-s-2\right|^{2}.\]
 In particular,

\global\long\def\theequation{${9}$}
\begin{equation}
\left\Vert (1-rv)^{2}\sum_{k=0}^{n-2}(k+1)\left(f,\, e_{k+2}\right)_{H^{2}}v^{k}\right\Vert _{2}^{2}\geq s\left|(n-s)+2r(n-s-1)+r^{2}(n-s-2)\right|^{2}.\label{eq:}\end{equation}
 Passing after to the limit as $n\rightarrow\infty$ in (5), we obtain
(using (8))

\global\long\def\theequation{${10}$}
\begin{equation}
\frac{1}{1+r}\liminf_{n\rightarrow\infty}\frac{1}{n\left\Vert f\right\Vert _{2}}\left\Vert (1-rv)^{2}\sum_{k=0}^{n-2}(k+1)\left(f,\, e_{k+2}\right)_{H^{2}}v^{k}\right\Vert _{2}\geq\label{eq:}\end{equation}
 \[
\geq\liminf_{n\rightarrow\infty}\frac{1-r}{n}\frac{\left\Vert f'\right\Vert _{2}}{\left\Vert f\right\Vert _{2}}\geq\]
 \[
\frac{1}{1+r}\liminf_{n\rightarrow\infty}\frac{1}{n\left\Vert f\right\Vert _{2}}\left\Vert (1-rv)^{2}\sum_{k=0}^{n-2}(k+1)\left(f,\, e_{k+2}\right)_{H^{2}}v^{k}\right\Vert _{2}.\]
 This gives

\global\long\def\theequation{${11}$}
\begin{equation}
\liminf_{n\rightarrow\infty}\frac{1-r}{n}\frac{\left\Vert f'\right\Vert _{2}}{\left\Vert f\right\Vert _{2}}=\frac{1}{1+r}\liminf_{n\rightarrow\infty}\frac{1}{n\left\Vert f\right\Vert _{2}}\left\Vert (1-rv)^{2}\sum_{k=0}^{n-2}(k+1)\left(f,\, e_{k+2}\right)_{H^{2}}v^{k}\right\Vert _{2}.\label{eq:}\end{equation}
 Now, since \[
\left\Vert f\right\Vert _{2}^{2}=s_{n}+3,\]
 using (9) we obtain\[
\liminf_{n\rightarrow\infty}\frac{1}{n^{2}\left\Vert f\right\Vert _{2}^{2}}\left\Vert (1-rv)^{2}\sum_{k=0}^{n-2}(k+1)\left(f,\, e_{k+2}\right)_{H^{2}}v^{k}\right\Vert _{2}^{2}\geq\]
 \[
\geq\liminf_{n\rightarrow\infty}\frac{1}{n^{2}\left\Vert f\right\Vert _{2}^{2}}(\left\Vert f\right\Vert _{2}^{2}-3)\left|(n-s)+2r(n-s-1)+r^{2}(n-s-2)\right|^{2}.\]
 Since\[
\lim_{n\rightarrow\infty}\frac{3}{n^{2}s_{n}^{2}}\left|(n-s)+2r(n-s-1)+r^{2}(n-s-2)\right|^{2}=0,\]
 we get \[
\liminf_{n\rightarrow\infty}\frac{1}{n^{2}\left\Vert f\right\Vert _{2}^{2}}\left\Vert (1-rv)^{2}\sum_{k=0}^{n-2}(k+1)\left(f,\, e_{k+2}\right)_{H^{2}}v^{k}\right\Vert _{2}^{2}\geq\]
 \[
\geq\liminf_{n\rightarrow\infty}\frac{1}{n^{2}s_{n}^{2}}s_{n}^{2}\left|(n-s_{n})+2r(n-s_{n}-1)+r^{2}(n-s_{n}-2)\right|^{2}=\]
 \[
=\lim_{n\rightarrow\infty}\frac{1}{n^{2}}\left|(n-s_{n})+2r(n-s_{n}-1)+r^{2}(n-s_{n}-2)\right|^{2}=\]
 \[
=\lim_{n\rightarrow\infty}\frac{1}{n^{2}}\left|n+2rn+r^{2}n\right|^{2}=(1+r)^{4}.\]
 We can now conclude that\[
\liminf_{n\rightarrow\infty}\frac{1-r}{n}\left\Vert D\right\Vert _{K_{b_{r}^{n}}\rightarrow H^{2}}\geq\liminf_{n\rightarrow\infty}\frac{1-r}{n}\frac{\left\Vert f'\right\Vert _{2}}{\left\Vert f\right\Vert _{2}}=\]
 \[
=\frac{1}{1+r}\liminf_{n\rightarrow\infty}\frac{1}{n\left\Vert f\right\Vert _{2}}\left\Vert (1-rv)^{2}\sum_{k=0}^{n-2}(k+1)\left(f,\, e_{k+2}\right)_{H^{2}}v^{k}\right\Vert _{2}\geq\frac{(1+r)^{2}}{1+r}=1+r.\]
 \textbf{Step 3. Conclusion.} Using both \textbf{Step 1 }and\textbf{
Step 2}, we get\textbf{ }\[
\limsup_{n\rightarrow\infty}\frac{1-r}{n}C_{n,\, r}=\liminf_{n\rightarrow\infty}\frac{1-r}{n}C_{n,\, r}=1+r,\]
 which means that the sequence $\left(\frac{1}{n}C_{n,\, r}\right)_{n\geq1}$
is convergent and \[
\lim_{n\rightarrow\infty}\frac{1}{n}C_{n,\, r}=\frac{1+r}{1-r}.\]

\end{proof}
\begin{flushleft}
\textbf{Comments} 
\par\end{flushleft}

(a) Bernstein-type inequalities for $K_{B}$ appeared as early as
in 1991. There, the boundedness of $D\,:\left(K_{B},\,\left\Vert \cdot\right\Vert _{H^{p}}\right)\rightarrow\left(H^{p},\,\left\Vert \cdot\right\Vert _{H^{p}}\right)$
was covered for the full range $1\le p\le\infty$. In {[}Dya1{]},
the chief concern of K. Dyakonov was compactness (plus a new, simpler,
proof of boundedness). Now, using both {[}BoEr{]} Th. 7.1.7 p. 324
, (or equivalently M. Levin's inequality {[}L{]}) and complex interpolation,
we could recover the result of K. Dyakonov for $H^{p}$ spaces, $2\leq p\leq\infty$
and our method could give a better numerical constant $c_{p}$ in
the inequality \[
\left\Vert f'\right\Vert _{H^{p}}\leq c_{p}\left\Vert B'\right\Vert _{\infty}\left\Vert f\right\Vert _{H^{p}}.\]
 The case $1\leq p\leq2$ can be treated using the partial result
of K. Dyakonov $(p=1)$ and still complex interpolation.

(b) In the same spirit, it is also possible to generalize the above
Bernstein-type inequality to the same class of rational functions
$f$ in $\mathbb{D}$, replacing the Hardy space $H^{2}$ by Besov
spaces $B_{2,\,2}^{s}$, $s\in\mathbb{R},$ of all holomorphic functions
$f=\sum_{k\geq0}\hat{f}(k)z^{k}$ in $\mathbb{D}$ satisfying \[
\left\Vert f\right\Vert _{B_{2,\,2}^{s}}:=\left(\sum_{k\geq0}(k+1)^{2s}\left|\hat{f}(k)\right|^{2}\right)^{\frac{1}{2}}<\infty.\]
 The same spaces are also known as Dirichlet-Bergman spaces. (In particular,
the classical Bergman space corresponds to $s=-\frac{1}{2}$ and the
classical Dirichlet space corresponds to $s=\frac{1}{2}$). Using
the above approach, one can prove the sharpness of the growth order
$\frac{n}{1-r}$ in the corresponding Bernstein-type inequality

\global\long\def\theequation{${3}$}

\begin{equation}
\left\Vert f'\right\Vert _{B_{2,\,2}^{s}}\leq c_{s}\frac{n}{1-r}\left\Vert f\right\Vert _{B_{2,\,2}^{s}},\label{eq:}\end{equation}
 (at least for integers values of $s$).

(c) One can also prove an inequality

\global\long\def\theequation{${4}$}

\begin{equation}
\left\Vert f\right\Vert _{B_{2,\,2}^{s}}\leq c_{s}'\left(\frac{n}{1-r}\right)^{s}\left\Vert f\right\Vert _{H^{2}},\label{eq:}\end{equation}
 for $s\geq0$ and the same class of functions (essentially, this
inequality can be found in {[}Dya2{]}), and show the sharpness of
the growth order $\left(\frac{n}{1-r}\right)^{s}$ (at least for integers
values of $s$). An application of this inequality lies in constrained
$H^{\infty}$ interpolation in weighted Hardy and Bergman spaces,
see {[}Z1{]} and {[}Z2{]} for details.

Notice that already E. M. Dyn'kin (in {[}Dyn{]}), and A. A. Pekarskii
(in {[}Pe1{]}, {[}Pe2{]} and {[}PeSt{]}), studied Bernstein-type inequalities
for rational functions in Besov and Sobolev spaces. In particular,
they applied such inequalities to inverse theorems of rational approximation.
Our approach is different and more constructive. We are able to obtain
uniform bounds depending on the geometry of poles of order $n$, which
allows us to obtain estimates which are asymptotically sharp.

Also, in paper {[}Dya3{]} of K. Dyakonov (see Sections 10, 11 at the
end), there are Bernstein-type inequalities involving Besov and Sobolev
spaces that contain, as special cases, the earlier version from ,
Pekarskii's inequalities for rational functions, and much more. K.
Dyakonov used those Bernstein-type inequalities to \textquotedbl{}interpolate\textquotedbl{},
in a sense, between the polynomial and rational inverse approximation
theorems (in response to a question raised by E. M. Dyn'kin). Finally,
he has recently studied the \textquotedbl{}reverse Bernstein inequality\textquotedbl{}
in $K_{B}$; this is done in {[}Dya3{]}.

(d) The above comments can lead to wonder what happens if we replace
Besov spaces $B_{2,\,2}^{s}$ by other Banach spaces, for example
by $W$, the Wiener algebra of absolutely convergent Taylor series.
In this case, we obtain

\global\long\def\theequation{${5}$}

\begin{equation}
\left\Vert f\right\Vert _{W}\leq c(n,r)\left\Vert f\right\Vert _{H^{2}}\label{eq:}\end{equation}
 where $c(n,\, r)\leq c\left(\frac{n^{2}}{1-r}\right)^{\frac{1}{2}}$
and $c$ is a numerical constant. We suspect that $\left(\frac{n^{2}}{1-r}\right)^{\frac{1}{2}}$
is the right growth order of $c(n,\, r)$. An application of this
inequality to an estimate of the norm of the resolvent of an $n\times n$
power-bounded matrix $T$ on a Banach space is given in {[}Z3{]}.
Inequality $(5),$ above, is deeply linked with the inequality

\global\long\def\theequation{${6}$}

\begin{equation}
\left\Vert f'\right\Vert _{H^{1}}\leq\gamma n\left\Vert f\right\Vert _{H^{\infty}},\label{eq:}\end{equation}
 through Hardy's inequality : \[
\left\Vert f\right\Vert _{W}\leq\pi\left\Vert f'\right\Vert _{H^{1}}+\left|f(0)\right|,\]
 for all $f\in W\,,$ (see {[}N2{]} p. 370 8.7.4 -(c)).

Inequality (6) is (shown and) used by R. J. LeVeque and L. N. Trefethen
in {[}LeTr{]} with $\gamma=2$, and later by M. N. Spijker in {[}Sp{]}
with $\gamma=1$ (an improvement) so as to apply it to the Kreiss
Matrix Theorem in which the power boundedness of $n\times n$ matrices
is related to a resolvent condition on these matrices.

\pagebreak{}

\begin{flushleft}
\textbf{Acknowledgement.} 
\par\end{flushleft}

I would like to thank Professor Nikolai Nikolski for his invaluable
help and his precious advices. I also would like to thank Professor
Alexander Borichev for many helpful discussions.

\vspace{0.4cm}

\noun{CMI-LATP, UMR 6632, Université de Provence, 39, rue F.-Joliot-Curie,
13453 Marseille cedex 13, France}

\textit{E-mail address} : rzarouf@cmi.univ-mrs.fr 
\end{document}